 \newtheorem{thm}{Theorem}[section]
 \theoremstyle{definition}
 \newtheorem{defi}[thm]{Definition}
 \theoremstyle{remark}
 \newtheorem{rem}[thm]{Remark}
 \numberwithin{equation}{section}
\numberwithin{equation}{section}
\numberwithin{figure}{section}
\newcommand{\Gra}{{\mathsf{Gra} }}
\newcommand{\dGra}{{\mathsf{dGra} }}
\newcommand{\KGra}{{\mathsf {KGra} }}
\newcommand{\GC}{{\mathsf{GC}}}
\newcommand{\fGC}{{\mathsf{fGC}}}
\newcommand{\dfGC}{{\mathsf{dfGC}}}
\newcommand{\dgra}{{\rm d g r a }}
\newcommand{\OC}{{\mathsf{OC}}}
\newcommand{\GRT}{{\mathsf{GRT}}}
\newcommand{\Sh}{{\rm S h }}
\newcommand{\Der}{{\rm D e r}}
\newcommand{\wt}[1]{{\widetilde{#1}}}
\newcommand{\sCbu}{C^{\bullet+1}}
\newcommand{\Cbu}{C^{\bullet}}
\newcommand{\al}{{\alpha}}
\newcommand{\bul}{{\bullet}}
\newcommand{\ed}{{\bullet\hspace{-0.05cm}-\hspace{-0.05cm}\bullet}}
\newcommand{\mgl}{{\mathfrak{gl}}}
\newcommand{\si}{{\sigma}}
\newcommand{\ga}{{\gamma}}
\newcommand{\ve}{{\varepsilon}}
\newcommand{\G}{{\Gamma}}
\newcommand{\pa}{{\partial}}
\newcommand{\bs}{{\mathbf{s}}}
\newcommand{\cD}{{\mathcal {D}}}
\newcommand{\cA}{{\mathcal{A}}}
\newcommand{\cH}{{\mathcal{H}}}
\newcommand{\cO}{{\mathcal {O}}}
\newcommand{\cV}{{\mathcal {V}}}
\newcommand{\bbK}{{\mathbb K}}
\newcommand{\bbR}{{\mathbb R}}
\newcommand{\bbZ}{{\mathbb Z}}
\newcommand{\bbQ}{{\mathbb Q}}
\newcommand{\te}{\theta}
\newcommand{\Te}{\Theta}
\title[Exhausting quantization procedures]
 {Exhausting formal quantization procedures}
\author[V. A. Dolgushev]{Vasily Dolgushev}
\address{%
Department of Mathematics\\
Temple University\\
1805 N. Broad. St. Rm. 638\\
Philadelphia PA, 19130, USA}
\email{vald@temple.edu}
\subjclass{53D55; 19D55}
\keywords{Deformation quantization, formality theorems}
\date{}
\begin{document}

\maketitle

\begin{abstract}
In paper \cite{stable1} the author introduced stable formality quasi-iso\-mor\-phisms 
and described the set of its homotopy classes. This result can be interpreted as 
a complete description of formal quantization procedures. 
In this note we give a brief exposition of stable formality 
quasi-iso\-mor\-phisms and prove that every homotopy class 
of stable formality quasi-iso\-mor\-phisms contains a representative 
which admits globalization. 
This note is loosely based on the talk given 
by the author at XXX Workshop on Geometric Methods 
in Physics in Bialowieza, Poland.  
\end{abstract}

\section{Introduction}
In seminal paper \cite{K} M. Kontsevich constructed 
an $L_{\infty}$ quasi-iso\-mor\-phism from the graded Lie algebra of 
polyvector fields on the affine space $\bbR^d$ to the dg Lie algebra 
of Hochschild cochains $\Cbu(A)$ for the polynomial 
algebra $A = \bbR[x^1,x^2, \dots, x^d]$\,. This result implies 
that equivalence classes of star-products on $\bbR^d$ are 
in bijection with the equivalence classes of formal Poisson 
structures on $\bbR^d$\,. This theorem also implies that 
Hochschild cohomology of a deformation quantization 
algebra is isomorphic to the Poisson cohomology of 
the corresponding formal Poisson structure. 

In the view of these consequences, we will think about
$L_{\infty}$ quasi-iso\-mor\-phisms from the graded Lie algebra of 
polyvector fields on the affine space $\bbR^d$ to the dg Lie algebra 
of Hochschild cochains $\Cbu(A)$ as {\it formal quantization 
procedures}. 

Following \cite{erratum} one can define a natural notion 
of homotopy equivalence on the set of $L_{\infty}$-morphisms 
between dg Lie algebras (or even $L_{\infty}$-algebras). 
Furthermore, according to Lemma B.5 from \cite{BDW}, 
homotopy equivalent $L_{\infty}$ quasi-morphisms for 
$\Cbu(A)$ give the same bijection between the set of
equivalence classes of star-products and the set of 
equivalence classes of formal Poisson structures. 
Thus, for the purposes of applications, we should only 
be interested in homotopy classes of formality quasi-iso\-mor\-phisms. 

In paper \cite{stable1} the author developed a framework 
of what he calls {\it stable formality quasi-iso\-mor\-phisms  (SFQ)} 
and showed that homotopy classes of such SFQ's form 
a torsor for the group which 
is obtained by exponentiating the Lie algebra $H^0 (\GC)$ 
where $\GC$ is the graph complex introduced by 
M. Kontsevich in \cite[Section 5]{K-conj}. Any SFQ gives us 
an $L_{\infty}$ quasi-iso\-mor\-phism 
for the Hochschild cochains of $A= \bbR[x^1,x^2, \dots, x^d]$ 
in all\footnote{In fact they are also defined 
for any $\bbZ$-graded affine space.} dimensions $d$ simultaneously. 
Moreover, homotopy equivalent SFQ's
give homotopy equivalent  $L_{\infty}$ quasi-iso\-mor\-phisms 
for the Hochschild cochains of $A= \bbR[x^1,x^2, \dots, x^d]$\,. 
Thus the main result (Theorem 6.2) of \cite{stable1} can be 
interpreted as a complete description of formal quantization 
procedures in the stable setting. 

In the next section we remind the full (directed) graph complex
and its relation to Kontsevich's graph complex  $\GC$ 
\cite[Section 5]{K-conj}. In Section 3 we give a brief exposition of 
stable formality quasi-iso\-mor\-phisms (SFQ). Finally, in Section 4 we 
prove that every SFQ is homotopy equivalent to an SFQ which 
admits globalization.

~\\[-0.3cm]
{\bf Notation and conventions.} In this note we assume that 
the ground field $\bbK$ contains the field of reals.  
For most of algebraic structures considered 
in this note, the underlying symmetric monoidal category is 
the category of unbounded cochain complexes of $\bbK$-vector spaces.
For a cochain complex $\cV$ we denote 
by $\bs \cV$ (resp. by $\bs^{-1} \cV$) the suspension (resp. the 
desuspension) of $\cV$\,. In other words, 
$$
\big(\bs \cV\big)^{\bul} = \cV^{\bul-1}\,,  \qquad
\big(\bs^{-1} \cV\big)^{\bul} = \cV^{\bul+1}\,. 
$$
$\Cbu(A)$  denotes the Hochschild cochain
complex of an associative algebra  (or more generally an $A_{\infty}$-algebra) $A$
with coefficients in $A$\,. For a commutative ring $R$ and an $R$-module $V$ we denote 
by $S_R(V)$ the symmetric algebra of $V$ over $R$\,.

Given an operad $\cO$, we denote by $\circ_i$ the elementary 
operadic insertions: 
$$
\circ_i : \cO(n) \otimes \cO(k) \to \cO(n+k-1)\,, \qquad 1 \le i \le n\,.
$$

The notation $\Sh_{p,q}$ is reserved for the set of $(p,q)$-shuffles in $S_{p+q}$\,.
A graph is {\it directed} if each edge 
carries a chosen direction. A graph $\G$ with 
$n$ vertices is called {\it labeled} if $\G$ is equipped with a bijection between 
the set of its vertices and the set $\{1,2, \dots, n\}$\,.  
$\ve$ denotes a formal deformation parameter.

~\\[-0.4cm]
{\bf Acknowledgment.}
I would like to thank  Chris Rogers and  Thomas Willwacher
for  numerous illuminating discussions. 
This paper is loosely based on the talk given 
by the author at XXX Workshop on Geometric Methods 
in Physics in Bialowieza, Poland.   
The cost of my trip to this conference was partially 
covered by the NSF grant \# 1124929, which I acknowledge.
I am partially supported by the NSF grant DMS 0856196, 
and the grant FASI RF 14.740.11.0347.

\section{The full directed graph complex $\dfGC$} 
\label{sec:dfGC}
In this section we recall from \cite{Thomas} an extended version $\dfGC$ of 
Kontsevich's graph complex $\GC$ \cite[Section 5]{K-conj}.
For this purpose, we first introduce a collection of 
auxiliary sets $\{\dgra(n) \}_{n \ge 1}$\,.
An element of $\dgra_{n}$ is a directed labelled graph $\G$
with $n$ vertices and with the additional piece 
of data: the set of edges of $\G$ is equipped with a 
total order. An example of an element in $\dgra_4$ is 
shown on figure \ref{fig:exam}. 
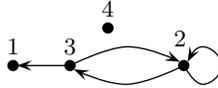
\begin{figure}[htp] 
\centering 
\psset{unit=0.5cm}
\begin{pspicture}(4,3)
\pscircle*(1,2){0.15}
\rput(1,2.5){\small $4$}
\pscircle*(-1.5,1){0.15}
\rput(-1.5,1.5){\small $1$}
\pscircle*(0,1){0.15}
\rput(0,1.5){\small $3$}
\pscircle*(3,1){0.15}
\rput(2.9,1.7){\small $2$}
\psline[arrowsize=4pt, linewidth=0.5pt]{->}(0,1)(-1.4,1)
\pscurve[arrowsize=4pt, linewidth=0.5pt]{->}(0,1)(1.5,1.5)(2.9,1.1)
\pscurve[arrowsize=4pt, linewidth=0.5pt]{<-}(0.1,0.9)(1.5,0.5)(3,1)
\pscurve[arrowsize=4pt, linewidth=0.5pt]{->}(3,1)(3.5, 0.5) (4,1) (3.5, 1.5)(3.1,1.1)
\end{pspicture} 
\caption{The edges are equipped with the order 
$(3,1) < (3,2) < (2,3) < (2,2)$ } \label{fig:exam}
\end{figure}

Next, we introduce a collection of graded vector spaces 
$\{\dGra(n) \}_{n \ge 1}$\,.
The space  $\dGra(n)$ is spanned by elements of 
$\dgra_n$, modulo the relation $\G^{\si} = (-1)^{|\si|} \G$
where the graphs $\G^{\si}$ and $\G$ correspond to the same
directed labelled graph but differ only by permutation $\si$
of edges. We also declare that 
the degree of a graph $\G$ in $\dGra(n)$ equals 
$-e(\G)$, where $e(\G)$ is the number of edges in $\G$\,.
For example, the graph $\G$ on figure \ref{fig:exam} has 
$4$ edges. Thus its degree is $-4$\,. 

Following \cite{Thomas}, the collection $\{\dGra(n) \}_{n \ge 1}$
forms an operad.
The symmetric group $S_n$ acts on $\dGra(n)$ in the 
obvious way by rearranging labels and the operadic 
multiplications are defined in terms of natural operations 
of erasing vertices and attaching edges to vertices.  

The operad $\dGra$ can be upgraded to a 2-colored operad 
$\KGra$ whose spaces\footnote{For more details, we refer the 
reader to \cite[Section 3]{stable1}.} 
are formal linear combinations
of graphs used by M. Kontsevich in \cite{K}.

We define the graded vector space $\dfGC$
by setting
\begin{equation}
\label{dfGC}
\dfGC = \prod_{n \ge 1}\bs^{2n-2} \Big( \dGra(n) \Big)^{S_n}\,.
\end{equation}

Next, we observe that the formula
\begin{equation}
\label{bullet}
\G \bullet \wt{\G} =
\sum_{\si \in \Sh_{k, n-1}} \si \big( \G \circ_1 \wt{\G} \big)
\end{equation}
$$
\G \in \Big( \dGra(n) \Big)^{S_n}\,, \qquad \qquad \wt{\G} \in \Big( \dGra(k) \Big)^{S_k}
$$
defines a degree zero $\bbK$-bilinear operation on 
$
\displaystyle \bigoplus_{n \ge 1}\bs^{2n-2} \Big( \dGra(n) \Big)^{S_n}
$
which extends in the obvious way to the graded vector space
$\dfGC$ \eqref{dfGC}.  

It is not hard to show that the operation \eqref{bullet}
satisfies axioms of the pre-Lie algebra and hence $\dfGC$
is naturally a Lie algebra with the bracket give by the formula
\begin{equation}
\label{bracket}
[\ga, \wt{\ga}] = \ga \bullet  \wt{\ga} - (-1)^{|\ga| |\wt{\ga}|}\, \wt{\ga} \bullet \ga \,,   
\end{equation}
where $\ga$ and $\wt{\ga}$ are homogeneous vectors in $\dfGC$\,.

A direct computation shows that the degree $1$
vector  
\begin{equation}
\label{G-ed}
\begin{tikzpicture}[scale=0.5, >=stealth']
\tikzstyle{w}=[circle, draw, minimum size=4, inner sep=1]
\tikzstyle{b}=[circle, draw, fill, minimum size=4, inner sep=1]
\draw (-2,0) node[anchor=center] {{$ \G_{\ed} = $}};
\node [b] (b1) at (0,0) {};
\draw (0,0.6) node[anchor=center] {{\small $1$}};
\node [b] (b2) at (1.5,0) {};
\draw (1.5,0.6) node[anchor=center] {{\small $2$}};
\draw [->] (b1) edge (b2);
\draw (3,0) node[anchor=center] {{$+$}};
\node [b] (bb1) at (4.5,0) {};
\draw (4.5,0.6) node[anchor=center] {{\small $2$}};
\node [b] (bb2) at (6,0) {};
\draw (6,0.6) node[anchor=center] {{\small $1$}};
\draw [->] (bb1) edge (bb2);
\end{tikzpicture}
\end{equation}
satisfies the Maurer-Cartan equation
$[\,\G_{\ed}, \G_{\ed}\,] = 0\,.$

Thus, $\dfGC$ forms a dg Lie algebra with the bracket \eqref{bracket}
and the differential 
\begin{equation}
\label{pa}
\pa = [\,\G_{\ed} , ~ ]\,.
\end{equation}

\begin{defi}
The cochain complex $(\dfGC, \pa)$ is called the 
full directed graph complex.   
\end{defi}

Let us observe that every undirected labeled graph
$\G$ with $n$ vertices and with a chosen order on the set of its edges can 
be interpreted as the sum of all directed labeled graphs $\G_{\al}$ in $\dgra(n)$
from which the graph $\G$ is obtained by forgetting 
directions on edges.  For example, 
\begin{equation}
\label{G-ed-new}
\begin{tikzpicture}[scale=0.5, >=stealth']
\tikzstyle{w}=[circle, draw, minimum size=4, inner sep=1]
\tikzstyle{b}=[circle, draw, fill, minimum size=4, inner sep=1]
\draw (-2,0) node[anchor=center] {{$ \G_{\ed} = $}};
\node [b] (b1) at (0,0) {};
\draw (0,0.6) node[anchor=center] {{\small $1$}};
\node [b] (b2) at (1.5,0) {};
\draw (1.5,0.6) node[anchor=center] {{\small $2$}};
\draw (b1) edge (b2);
\end{tikzpicture}
\end{equation} 

Thus, using undirected labeled graphs we may form 
a suboperad $\Gra$ inside $\dGra$ and the sub- dg Lie 
algebra 
\begin{equation}
\label{fGC}
\fGC = \prod_{n \ge 1}\bs^{2n-2} \Big( \Gra(n) \Big)^{S_n} \quad \subset \quad \dfGC 
\end{equation} 

\begin{defi}[M. Kontsevich, \cite{K-conj}]
\emph{Kontsevich's graph complex} $\GC$ is the subcomplex 
\begin{equation}
\label{GC}
\GC \subset \fGC
\end{equation}
formed by (possibly infinite) linear combinations of 
connected graphs $\G$ satisfying these two properties: 
{\it each vertex of $\G$ has valency $\ge 3$, and 
the complement to any vertex is connected.} 
\end{defi}
It is easy to see that $\GC$ is a sub- dg Lie algebra of $\fGC$\,.
Furthermore, following\footnote{See lecture notes \cite{notes} for more detailed 
exposition.} \cite{Thomas} we have 
\begin{thm}[T. Willwacher, \cite{Thomas}] 
\label{thm:H-dfGC}
The cohomology of $\dfGC$ can 
be expressed in terms of cohomology of $\GC$\,. More precisely,
\begin{equation}
\label{H-dfGC-GC}
H^{\bul} (\dfGC) = \bs^{-2}\,S\big(\bs^2\, \cH \big)
\end{equation}
where 
$$
\cH = H^{\bul}(\GC) ~ \oplus ~ \bigoplus_{m \ge 0} \bs^{4m-1} \bbK\,.
$$
\end{thm}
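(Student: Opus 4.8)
The plan is to descend, in three steps, from the full \emph{directed} complex $\dfGC$ through the undirected full complex $\fGC$ to Kontsevich's complex $\GC$. \emph{Step 1 (disconnected $\to$ connected):} let $\dfGC_{\conn}\subset\dfGC$ be the subcomplex spanned by connected graphs --- it is a subcomplex, since $\pa=[\G_{\ed},-]$ only inserts the connected graph $\G_{\ed}$ at a vertex and hence cannot disconnect anything. As is standard for graph complexes, the suspension factors compatibly with disjoint union, $\bs^{2n-2}\cong\bs^{-2}\bigotimes_{i}\bigl(\bs^{2}\,\bs^{2n_i-2}\bigr)$ over the connected components, and $\pa$ is a derivation for the disjoint-union product; hence, as a complex, $\dfGC\cong\bs^{-2}\,\wh{S}\bigl(\bs^{2}\dfGC_{\conn}\bigr)$, the completed symmetric algebra on the connected part. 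Since $\bbQ\subset\bbK$, symmetric powers commute with cohomology, so $H^{\bul}(\dfGC)=\bs^{-2}\,\wh{S}\bigl(\bs^{2}H^{\bul}(\dfGC_{\conn})\bigr)$; and because Steps 2--3 will exhibit $H^{\bul}(\dfGC_{\conn})$ as degreewise finite-dimensional and vanishing in degrees below $-1$, the generators of $\bs^{2}H^{\bul}(\dfGC_{\conn})$ lie in degrees $\ge 1$ and $\wh S$ agrees degreewise with the honest symmetric algebra $S$. Thus everything reduces to identifying $H^{\bul}(\dfGC_{\conn})$ with $\cH$.

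\emph{Step 2 (directed $\to$ undirected):} the assignment sending an undirected graph to the signed sum over all orientations of its edges defines an inclusion of complexes $\fGC_{\conn}\hookrightarrow\dfGC_{\conn}$, and the claim is that it is a quasi-isomorphism. This is the main technical input of \cite{Thomas} (see also the lecture notes \cite{notes}): on the mapping cone of this inclusion one sets up a spectral sequence --- filtering, say, by the number of edges or by the number of oriented cycles --- and verifies that on each page the ``genuinely directed'' graphs pair off and cancel; equivalently, one writes down an explicit homotopy straightening out edge directions. I expect this step to be the main obstacle of the whole argument.

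\emph{Step 3 ($\fGC_{\conn}$ in terms of $\GC$):} it remains to compute $H^{\bul}(\fGC_{\conn})$, which one does by trimming low-valence vertices via three standard homotopies. (i) Graphs carrying a vertex of valence $\le 1$ form an acyclic subcomplex (contract a chosen hair), so one may pass to graphs with all valences $\ge 2$. (ii) Modulo those, the graphs carrying a bivalent vertex form a subcomplex whose cohomology is spanned exactly by the loop graphs $L_n$ (the $n$-cycle): smoothing a bivalent vertex along a chosen incident edge is a contracting homotopy away from the $L_n$, where it degenerates. A direct symmetry count then settles which loops survive: the cyclic rotation of the $n$-gon acts on the ordered edge set of $L_n$ by the sign $(-1)^{n-1}$, so $L_n=0$ for $n$ even; for $n$ odd, the reflection of the $n$-gon through a vertex and the midpoint of the opposite edge acts by $(-1)^{(n-1)/2}$, so $L_n\ne 0$ precisely when $n\equiv 1\bmod 4$; and $\pa L_n=0$, since the only loop graph that can occur in it, $L_{n+1}$, vanishes for $n$ odd. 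As $\deg L_{4m+1}=2(4m+1)-2-(4m+1)=4m-1$, these classes contribute exactly $\bigoplus_{m\ge 0}\bs^{4m-1}\bbK$. (iii) Among graphs with all valences $\ge 3$, a further spectral-sequence argument (again from \cite{Thomas,notes}) shows that imposing Kontsevich's requirement that the complement to any vertex be connected does not change the cohomology, leaving $\GC$. Assembling the pieces, $H^{\bul}(\fGC_{\conn})=H^{\bul}(\GC)\oplus\bigoplus_{m\ge 0}\bs^{4m-1}\bbK=\cH$, and composing with Steps 1--2 gives $H^{\bul}(\dfGC)=\bs^{-2}\,S\bigl(\bs^{2}\cH\bigr)$, as desired.
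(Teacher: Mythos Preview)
The paper does not give its own proof of this theorem: it is stated as a result of Willwacher, with the reader referred to \cite{Thomas} and the lecture notes \cite{notes} for the argument. So there is no ``paper's proof'' to compare against; your sketch is an outline of the very argument the paper is citing.

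As such an outline, your three-step reduction is the standard route taken in \cite{Thomas}: the symmetric-algebra decomposition into connected pieces (Step~1), the quasi-isomorphism $\fGC_{\conn}\hookrightarrow\dfGC_{\conn}$ (Step~2), and the trimming of low-valence vertices together with the loop-graph computation (Step~3). Your identification of which loops $L_n$ survive, and their degrees, is correct and matches the summand $\bigoplus_{m\ge 0}\bs^{4m-1}\bbK$. You rightly flag Step~2 as the genuine technical input; in \cite{Thomas} this is handled by a careful spectral-sequence argument rather than a one-line homotopy, and your sketch does not supply that. Similarly, Step~3(iii) --- passing from arbitrary connected trivalent-or-higher graphs to those satisfying the ``complement of any vertex is connected'' condition in the paper's definition of $\GC$ --- is a separate nontrivial reduction in \cite{Thomas}, not an immediate consequence of the valence trimming. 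So your proposal is a faithful roadmap of Willwacher's proof, with the two hard steps correctly located but not carried out.
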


Using  decomposition \eqref{H-dfGC-GC}, it is not hard to see that 
\begin{equation}
\label{H-0}
H^0(\dfGC) \cong H^0(\GC)
\end{equation}
and the Lie algebra $H^0(\dfGC)$ is pro-nilpotent.

\section{Stable formality quasi-iso\-mor\-phisms}
Let $A = \bbK[x^1, x^2, \dots, x^d]$ be the algebra of functions
on the affine space $\bbK^d$ and let $V^{\bul}_A$ be the algebra of 
polyvector fields on $\bbK^d$
\begin{equation}
\label{V-A}
V^{\bul}_A = S_A \big( \bs\, \Der (A) \big)\,.
\end{equation}
Recall that $V^{\bul}_A$ is a free commutative algebra 
$V^{\bul}_A = \bbK[x^1, x^2, \dots, x^d, \te_1, \te_2, \dots, \te_d] $
over $\bbK$ in $d$ generators $x^1, x^2, \dots, x^d$ of 
degree zero and $d$ generators $\te_1, \te_2, \dots, \te_d$
of degree one.

It is know that $V^{\bul+1}_A$ is a graded Lie algebra.
The Lie bracket on $V^{\bul+1}_A$ is given by the formula: 
\begin{equation}
\label{Schouten}
[v, w]_S = (-1)^{|v|} \sum_{i=1}^d \frac{\pa v}{\pa \te_i} \frac{\pa w}{\pa x^i} 
-  (-1)^{|v||w| + |w|} \sum_{i=1}^d \frac{\pa w}{\pa \te_i} \frac{\pa v}{\pa x^i}\,.
\end{equation}
It is called the {\it Schouten bracket}. 

In plain English an $L_{\infty}$-morphism 
$U$ from $V^{\bul+1}_A$ to $\sCbu(A)$ is an 
infinite collection of maps 
\begin{equation}
\label{U-n}
U_n : \big( V^{\bul+1}_A \big)^{\otimes\, n} \to \sCbu(A)\,, \qquad n \ge 1
\end{equation}
compatible with the action of symmetric groups and
satisfying an intricate sequence of quadratic relations. The first relation 
says that $U_1$ is a map of cochain complexes, the second relation
says that $U_1$ is compatible with the Lie brackets up to 
homotopy with $U_2$ serving as a chain homotopy and so on.

Kontsevich's construction of such a sequence \eqref{U-n} is 
``natural'' in the following sense:  given polyvector fields
$v_1, v_2, \dots, v_n \in V^{\bul+1}_A$, 
the value 
\begin{equation}
\label{value}
U_n \big(v_1, v_2, \dots, v_n\big)(a_1, a_2, \dots, a_k)  
\end{equation}
of the cochain $U_n (v_1, v_2, \dots, v_n)$ on 
 polynomials  $a_1, a_2, \dots, a_k \in A$
is obtained via contracting all
indices of  derivatives of various orders of $v_1, \dots, v_n, a_1, \dots, a_k$
in such a way that the resulting map 
$$
(V^{\bul}_A)^{\otimes\, n} \otimes A^{\otimes\, k} \to A
$$
is $\mgl_d(\bbK)$-equivariant.
Thus each term in $U_n$ can be encoded by a directed graph with 
two types of vertices: vertices of one type are reserved for polyvector fields 
and vertices of another type are reserved for polynomials. 
 
Motivated by this observation, 
the author introduced  in \cite{stable1}  a notion of 
{\it stable formality quasi-iso\-mor\-phism (SFQ)} which 
formalizes  $L_{\infty}$ quasi-iso\-mor\-phisms 
$U$ for Hochschild cochains satisfying this property:
{\it  each term in 
$U_n$ is encoded by a graph with two types of vertices
and all the desired relations on $U_n$'s hold universally, i.e. on 
the level of linear combinations of graphs.}

The precise definition of SFQ
is given in terms of 2-colored dg operads $\OC$ and $\KGra$\,.
The later operad $\KGra$ is a 2-colored extension 
of the operad $\dGra$ which is ``assembled'' from graphs used 
by M. Kontsevich in \cite{K}. This operad comes with 
a natural action on the pair $(V^{\bul+1}_A, A= \bbK[x^1, \dots, x^d])$\,. 
The operad  $\OC$ governs open-closed homotopy 
algebras introduced in \cite{OCHA} by  H. Kajiura and J. Stasheff.
We recall that an open-closed homotopy 
algebra is a pair $(\cV, \cA)$ of 
cochain complexes equipped with the following data: 
\begin{itemize}

\item An $L_{\infty}$-structure on $\cV$;

\item an $A_{\infty}$-structure on $\cA$; and 

\item an $L_{\infty}$-morphism from $\cV$ to the 
Hochschild cochain complex $\Cbu(\cA)$ of the 
$A_{\infty}$-algebra $\cA$\,.

\end{itemize}

Since the operad $\KGra$ acts on the pair  
$(V^{\bul+1}_A, A= \bbK[x^1, \dots, x^d])$, 
any morphism of dg operads
\begin{equation}
\label{F}
F : \OC \to  \KGra
\end{equation}
gives us an  $L_{\infty}$-structure on $V^{\bul+1}_A$, 
an  $A_{\infty}$-structure on $A$ and an $L_{\infty}$ morphism 
from $V^{\bul+1}_A$ to $\Cbu(A)$\,.

An SFQ is defined as 
a morphism \eqref{F} of dg operads 
satisfying three boundary conditions. 
The first condition guarantees that the $L_{\infty}$-algebra 
structure on $V^{\bul+1}_A$ induced by $F$ coincides with the 
Lie algebra structure given by the Schouten bracket \eqref{Schouten}. 
The second condition implies that the $A_{\infty}$-algebra 
structure on $A$ coincides with the usual associative
(and commutative) algebra structure on polynomials. 
Finally, the third condition ensures that the $L_{\infty}$-morphism 
$$
U : V^{\bul+1}_A \leadsto \sCbu(A)
$$
induced by $F$ starts with the Hochschild-Kostant-Rosenberg 
embedding. In particular, the last condition implies that $U$
is an $L_{\infty}$ quasi-iso\-mor\-phism.

Kontsevich's construction \cite{K} provides us with an example 
of an SFQ over any extension of the field of 
reals\footnote{The existence of an SFQ over rationals is proved 
in papers \cite{overQ} and \cite{stable11}.} 
  
In paper \cite{stable1} the author also defined the notion of 
homotopy equivalence for SFQ's. This notion is motivated by 
the property that $L_{\infty}$ quasi-iso\-mor\-phisms 
$$
U,  \wt{U}: V^{\bul+1}_A \leadsto  \sCbu(A)
$$    
corresponding to homotopy equivalent SFQ's $F$ and $\wt{F}$ 
are connected by a homotopy which ``admits a graphical 
expansion'' in the above sense.

Following \cite{K-conj} we have a chain map $\Theta$ from the 
full (directed) graph complex $\dfGC$ to the deformation complex 
of the dg Lie algebra $V^{\bul+1}_A$ of polyvector fields. 
In particular, every degree zero cocycle in $\dfGC$ produces 
an $L_{\infty}$-derivation of  $V^{\bul+1}_A$\,.
Exponentiating these $L_{\infty}$-derivations we get 
an action of the (pro-uni\-po\-tent) group 
$$
\exp\, \Big( \dfGC^0 \cap \ker \pa \Big)
$$ 
on the set of $L_{\infty}$ quasi-iso\-mor\-phisms 
\begin{equation}
\label{U}
U : V^{\bul+1}_A  \leadsto \sCbu(A) 
\end{equation}
for $A  = \bbK[x^1, \dots, x^d]$\,. Namely, given a cocycle $\ga \in \dfGC^0$,
the action of $\exp(\ga)$ is defined by the formula
\begin{equation}
\label{action}
U \mapsto  U \circ \exp\big(-\Theta(\ga)\big)\,,
\end{equation}
where $\Theta$ is the chain map from $\dfGC$ to the deformation 
complex of $V^{\bul+1}_A$\,.

In \cite{stable1}, it was proved that the action \eqref{action} 
descends to an action of the (pro-unipotent) group 
\begin{equation}
\label{exp-H0}
\exp \big(H^0(\dfGC)\big)
\end{equation}
on the set of homotopy classes of SFQ's. Moreover, 
\begin{thm}[Theorem 6.2, \cite{stable1}]
\label{thm:torsor}
The group \eqref{exp-H0} acts simply transitively on the set of 
homotopy classes of SFQ's. 
\end{thm}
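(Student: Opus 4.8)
The plan is to recast the statement as a rigidity result for Maurer--Cartan elements in a pro-nilpotent $L_\infty$-algebra, and then deduce it by obstruction theory using Willwacher's computation, Theorem~\ref{thm:H-dfGC}. First I would rephrase an SFQ, which is a morphism of dg operads $F:\OC\to\KGra$ satisfying the three boundary conditions, as a Maurer--Cartan element of the deformation complex $\mathfrak C$ of operad morphisms $\OC\to\KGra$; since $\OC$ is cofibrant (or one first replaces it by a cofibrant resolution), morphisms out of $\OC$ and homotopies between them are faithfully recorded by $\mathrm{MC}(\mathfrak C)$ and by the gauge action of $\mathfrak C^0$. The three boundary conditions prescribe the leading terms of $F$ --- the HKR embedding, the Schouten bracket~\eqref{Schouten}, and the associative product on $A$ --- and hence cut out an affine sub-$L_\infty$-algebra $\mathfrak C_0\subset\mathfrak C$ governing SFQ's with this fixed leading data. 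In this language an SFQ is a point of $\mathrm{MC}(\mathfrak C_0)$, a homotopy of SFQ's is a gauge transformation (detected via a path object such as $\KGra\otimes\Omega_\bullet(\Delta^1)$), and the action~\eqref{action} is the restriction, along the chain map $\Theta$ of~\cite{K-conj}, of the gauge action of $\mathfrak C_0^0$ on $\mathrm{MC}(\mathfrak C_0)$.

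The crucial step is to compute $H^\bullet(\mathfrak C_0)$. Here I would combine the techniques of~\cite{Thomas} with Theorem~\ref{thm:H-dfGC} to show that $\Theta$ upgrades to a quasi-isomorphism from $\dfGC$, suitably shifted, onto $\mathfrak C_0$, compatibly with the Lie brackets. Consequently the deformations of an SFQ modulo gauge are classified by $H^0(\dfGC)$, obstructions lie in $H^1(\dfGC)$, and by~\eqref{H-0} the filtration of $\dfGC$ by the number of vertices (equivalently, on degree-zero cocycles, by loop order) is complete with pro-nilpotent associated graded, so every induction below converges; in particular the abstract gauge action is implemented by the explicit formula~\eqref{action}.

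Granting this dictionary, the theorem follows from two inductions over the vertex-number filtration. \emph{Transitivity.} Given SFQ's $F$ and $\wt F$, which share the same prescribed leading terms, suppose inductively that we have produced $\ga_{<k}\in\dfGC^0\cap\ker\pa$ with $\exp(\ga_{<k})\cdot F$ agreeing with $\wt F$ through weight $k-1$ up to homotopy. The weight-$k$ discrepancy is then a degree-zero cocycle of $\mathfrak C_0$, and being the difference of two genuine Maurer--Cartan elements it has a well-defined class in $H^0(\dfGC)$; taking $\ga_k$ to be a $\Theta$-lift of a representative, $\exp(\ga_{<k}+\ga_k)\cdot F$ agrees with $\wt F$ through weight $k$ up to homotopy, the exact part of the correction being absorbed into the homotopy --- which is precisely why only the $H^0$-class is relevant and the action descends to $\exp\bigl(H^0(\dfGC)\bigr)$. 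Passing to the limit gives $\ga$ with $\exp(\ga)\cdot[F]=[\wt F]$. \emph{Freeness.} Suppose $\ga\in\dfGC^0\cap\ker\pa$ and $\exp(\ga)$ fixes $[F]$, i.e.\ $F\circ\exp(-\Theta(\ga))\sim F$, and let $\ga_k$ be the lowest-weight nonzero component of $\ga$. Reading the homotopy at weight $k$ forces $\Theta(\ga_k)$ to be exact in $\mathfrak C_0$, so $[\ga_k]=0$ in $H^0(\dfGC)$ and $\ga_k=\pa\beta_k$ modulo higher weight for some $\beta_k\in\dfGC^{-1}$; replacing $\ga$ by its Baker--Campbell--Hausdorff product with $-\pa\beta_k$ --- which acts identically up to homotopy since coboundaries are gauge-trivial --- strictly increases the weight, and iteration together with pro-nilpotence gives $[\ga]=0$. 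Equivalently: the set of homotopy classes of SFQ's is nonempty by Kontsevich's construction~\cite{K}, and, for a fixed base SFQ $F$, the orbit map $g\mapsto g\cdot[F]$ from $\exp\bigl(H^0(\dfGC)\bigr)$ to this set is surjective by the first induction and injective by the second.

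I expect the identification of the second paragraph to be the main obstacle. The delicate points are: the bookkeeping of degrees, so that infinitesimal deformations of an SFQ genuinely land in degree $0$ rather than in some shifted degree; the verification that imposing the three boundary conditions, i.e.\ passing from $\mathfrak C$ to $\mathfrak C_0$, does not disturb the cohomology in the relevant range, which rests on the acyclicity of $\OC$ and $\KGra$ outside the purely graphical directions; and the check that Willwacher's quasi-isomorphism from~\cite{Thomas} is a morphism of Lie, not merely chain, complexes when restricted along $\Theta$, which is what makes~\eqref{action} the honest gauge action. Once this dictionary is in place the two inductions are routine, the pro-nilpotence required for convergence being guaranteed by~\eqref{H-0}.
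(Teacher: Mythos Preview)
The paper does not contain a proof of this theorem: it is merely \emph{cited} as Theorem~6.2 of \cite{stable1} and then used as input to the proof of Theorem~\ref{thm:main}. There is therefore no ``paper's own proof'' to compare your proposal against.

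That said, your outline is a faithful summary of the strategy actually carried out in \cite{stable1}: one identifies SFQ's with Maurer--Cartan elements in a convolution-type dg Lie algebra built from (the Koszul dual of) $\OC$ and $\KGra$, shows that the part of this complex relevant to deformations fixing the boundary data is quasi-isomorphic to $\dfGC$, and then runs the standard obstruction/induction argument over the filtration by number of vertices. Your self-diagnosed ``main obstacle'' --- establishing the quasi-isomorphism between $\dfGC$ and the restricted deformation complex $\mathfrak C_0$, compatibly with the Lie structures --- is indeed where essentially all of the work in \cite{stable1} lies; the inductions you sketch for transitivity and freeness are then routine, as you say. One small caveat: your freeness argument should conclude that the class of $\ga$ in $H^0(\dfGC)$ vanishes, not that $\ga$ itself is exact, but you implicitly acknowledge this when you speak of the action descending to $\exp\bigl(H^0(\dfGC)\bigr)$.
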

In the view of philosophy outlined in the Introduction,  
this result can be interpreted as a complete description 
of formal quantization procedures. 

\begin{rem}
According to the recent result \cite[Theorem 1]{Thomas} of T. Willwacher, 
$\exp \big( H^0 (\GC)\big)$ is isomorphic to the
Grothendieck-Teichmueller group 
$\GRT$ introduced by V. Drinfeld in \cite{Drinfeld}. Thus, combining 
this result with Theorem \ref{thm:torsor}, we conclude that 
formal quantization procedures are ``governed'' by the group $\GRT$.
\end{rem}

\begin{rem}
In recent preprint \cite{StableThomas} Thomas Willwacher computes 
stable cohomology of the graded Lie algebra of polyvector fields
with coefficients in the adjoint representation. His computations 
partially justify the name ``stable formality quasi-isomorphism'' 
chosen by the author in \cite{stable1}. In particular, Thomas 
Willwacher mentions in  \cite{StableThomas} a possibility to deduce the 
part about transitivity from Theorem \ref{thm:torsor} in a more conceptual way. 
\end{rem}

\section{Globalization of stable formality quasi-iso\-mor\-phisms}
Given an $L_{\infty}$ quasi-iso\-mor\-phism \eqref{U}
for $A  = \bbK[x^1, \dots, x^d]$ we can 
ask the question of whether we can use it to construct a sequence 
of $L_{\infty}$ quasi-iso\-mor\-phisms which connects 
the sheaf $V^{\bul+1}_X$ of polyvector fields to the 
sheaf $\cD^{\bul+1}_X$ of polydifferential operators on 
a smooth algebraic variety $X$ over $\bbK$\,. 
There are several similar constructions \cite{CEFT}, 
\cite{VdB}, \cite{Ye} which allow us to produce such a sequence
under the assumption that
the $L_{\infty}$ quasi-iso\-mor\-phism \eqref{U} satisfies the 
following properties:

\begin{enumerate}

\item[{\bf A)}] One can replace  $A  = \bbK[x^1, \dots, x^d]$ in \eqref{U} by 
its completion  $A_{formal} = \bbK[[x^1, \dots, x^d]]$ ;

\item[{\bf B)}] the structure maps $U_n$ of $U$ are 
$\mgl_d(\bbK)$-equivariant;

\item[{\bf C)}] if $n >1$ then
\begin{equation}
\label{U-n-vect}
U_n(v_1, v_2, \dots, v_n) =  0
\end{equation}
for every set of vector fields $v_1, v_2, \dots, v_n \in \Der(A_{formal})$;

\item[{\bf D)}] if $n \ge 2$ and $v \in \Der(A_{formal})$ has the form 
$$
v = \sum_{i,j =1}^{d} v^i_j x^j \frac{\pa}{\pa x^i}\,, \qquad v^i_j \in \bbK 
$$
then for every set  $w_2, \dots, w_n \in V^{\bul+1}_{A_{formal}}$
\begin{equation}
\label{v-linear}
U_n(v, w_2, \dots, w_n) =  0\,.
\end{equation}

\end{enumerate}

In paper \cite{Chern} it was shown that for every degree zero 
cocycle $\ga \in \GC$ the structure maps $\Te(\ga)_n$ of the 
$L_{\infty}$-derivation $\Te(\ga)$ satisfy these properties:

\begin{enumerate}

\item[{\bf a)}]  $\Te(\ga)$ can be viewed as an $L_{\infty}$-derivation
of $V^{\bul+1}_{A_{formal}}$ with $A_{formal} = \bbK[[x^1, \dots, x^d]]$ ;

\item[{\bf b)}] the structure maps $\Te(\ga)_n$ of $\Te(\ga)$ are 
$\mgl_d(\bbK)$-equivariant;

\item[{\bf c)}] if $n >1$ then
\begin{equation}
\label{Tega-n-vect}
\Te(\ga)_n(v_1, v_2, \dots, v_n) =  0
\end{equation}
for every set of vector fields $v_1, v_2, \dots, v_n \in \Der(A_{formal})$;

\item[{\bf d)}] if $n \ge 2$ and $v \in \Der(A_{formal})$ has the form 
$$
v =  \sum_{i,j =1}^{d}  v^i_j x^j \frac{\pa}{\pa x^i}\,, \qquad v^i_j \in \bbK 
$$
then for every set  $w_2, \dots, w_n \in V^{\bul+1}_{A_{formal}}$
\begin{equation}
\label{v-linear-Tega}
\Te(\ga)_n (v, w_2, \dots, w_n) =  0\,.
\end{equation}

\end{enumerate}
Properties {\bf a)} and {\bf b)} are obvious, while properties {\bf c)}
and {\bf d)} follow from the fact that each graph in the linear combination 
$\ga\in \GC$ has only vertices of valencies $\ge 3$\,. 

Using these properties of $\Te(\ga)$ together with Theorems 
\ref{thm:H-dfGC} and \ref{thm:torsor} we deduce the main 
result of this note: 
\begin{thm}
\label{thm:main}
Every homotopy class of SFQ's 
contains a representative which can be used to 
construct a sequence of $L_{\infty}$ quasi-iso\-mor\-phisms 
connecting the sheaf  $V^{\bul+1}_X$ of polyvector fields to the 
sheaf $\cD^{\bul+1}_X$ of polydifferential operators on 
a smooth algebraic variety $X$ over $\bbK$\,.
\end{thm}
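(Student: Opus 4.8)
The plan is to combine the torsor structure of Theorem \ref{thm:torsor} with the vanishing/equivariance properties \textbf{a)}--\textbf{d)} of the $L_\infty$-derivations $\Te(\ga)$. First I would observe that Kontsevich's original SFQ, call it $F_K$, satisfies properties \textbf{A)}--\textbf{D)}: this is essentially classical, since the maps $U_n$ coming from \cite{K} are built by integrating over configuration spaces and each admits a manifest graphical expansion, so the $\mgl_d$-equivariance is built in, the passage to $A_{formal}$ is harmless, and the valency bounds on Kontsevich graphs (after restricting to the part surviving in the stable setting) force $U_n$ to vanish when fed only vector fields, or a linear vector field in the first slot when $n\ge 2$. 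Granting this, by Theorem \ref{thm:torsor} any other homotopy class of SFQ is obtained from $[F_K]$ by acting with a unique element $\exp(\ga)$ with $\ga \in H^0(\dfGC)$, and by \eqref{H-0} we may take $\ga \in H^0(\GC)$, i.e.\ represented by a degree-zero cocycle in Kontsevich's graph complex $\GC$ all of whose vertices have valency $\ge 3$.

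Next I would unwind the action \eqref{action}: the representative in the target homotopy class produced by acting on $F_K$ is the SFQ whose induced $L_\infty$ quasi-isomorphism is $U_K \circ \exp(-\Te(\ga))$ (more precisely, one lifts $\ga$ to a cocycle in $\dfGC$; by \eqref{H-0} the lift can again be chosen inside $\GC$). So the task reduces to checking that the composite $U := U_K \circ \exp(-\Te(\ga))$ still satisfies \textbf{A)}--\textbf{D)}. Here I would use that these four properties are each closed under the relevant composition/exponentiation operations: property \textbf{A)} is just the statement that everything is defined over the completed algebra, which holds for $U_K$ and, by \textbf{a)}, for each $\Te(\ga)$, hence for the exponential and the composite; property \textbf{B)} ($\mgl_d$-equivariance) is preserved under composition of equivariant maps, using \textbf{b)}; and properties \textbf{C)} and \textbf{D)} — vanishing on vector fields, resp.\ on a linear vector field in the leading slot — are "ideal-like" conditions stable under the $L_\infty$-composition operations, and they hold for $U_K$ and, by \textbf{c)} and \textbf{d)}, for $\Te(\ga)$ and its exponential. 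Concretely, in the $L_\infty$-composition the structure maps of $U_K\circ\exp(-\Te(\ga))$ are sums of iterated substitutions of the $\Te(\ga)_m$ into the $(U_K)_\ell$; if the input is a tuple of vector fields, then since $\Te(\ga)_m$ has arity $\ge 2$ it either is fed $\ge 2$ vector fields (and vanishes by \textbf{c)}) or is not applied at all and one lands on some $(U_K)_\ell$ with $\ell\ge 2$ fed vector fields (vanishing by \textbf{C)}) — with a parallel argument for \textbf{D)}, noting that the distinguished linear vector field in the first slot is routed either into a $\Te(\ga)_m$ in its first slot (vanishing by \textbf{d)}) or into $(U_K)_\ell$ in its first slot (vanishing by \textbf{D)}). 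Once $U$ is seen to satisfy \textbf{A)}--\textbf{D)}, the cited globalization constructions of \cite{CEFT}, \cite{VdB}, \cite{Ye} apply verbatim and yield the desired sequence of $L_\infty$ quasi-isomorphisms $V^{\bul+1}_X \leadsto \cD^{\bul+1}_X$.

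The main obstacle I expect is bookkeeping in the last step: verifying that \textbf{C)} and \textbf{D)} genuinely survive the $L_\infty$-composition with $\exp(-\Te(\ga))$. The subtlety is that $\exp(-\Te(\ga))$ is an $L_\infty$-endomorphism of $V^{\bul+1}_{A_{formal}}$, not a strict map, so "plugging in" the composite into $U_K$ produces, a priori, a sum over rooted trees whose internal vertices carry various $\Te(\ga)_{m_i}$; one must check that along every such tree the vanishing is forced. The valency $\ge 3$ condition on graphs in $\GC$ is exactly what makes $\Te(\ga)$ have no arity-$1$ component, which is what prevents a linear vector field from "passing through" unchanged, and this is where properties \textbf{c)}, \textbf{d)} are used in an essential way. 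A secondary, mostly expository point is to recall precisely why Kontsevich's SFQ $F_K$ satisfies \textbf{A)}--\textbf{D)}; I would cite the construction in \cite{K} together with the graphical description of SFQ's from \cite{stable1}, where these properties are already visible from the shape of the admissible graphs. Finally one should note the minor point that although \eqref{action} uses cocycles in $\dfGC$, by \eqref{H-0} and the structure of $\GC\subset\fGC\subset\dfGC$ every class in $H^0(\dfGC)$ has a representative supported on graphs with all valencies $\ge 3$, so the hypotheses of \cite{Chern} recalled as \textbf{a)}--\textbf{d)} do apply to the chosen lift.
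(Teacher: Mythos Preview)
Your proposal is correct and follows essentially the same route as the paper: start from Kontsevich's SFQ $F_K$ satisfying \textbf{A)}--\textbf{D)}, use Theorem~\ref{thm:torsor} to write an arbitrary homotopy class as $\exp(\ga')(F_K)$ for a degree-zero cocycle $\ga'\in\dfGC$, replace $\ga'$ by a cohomologous $\ga\in\GC$ via \eqref{H-0}, and then argue that properties \textbf{a)}--\textbf{d)} of $\Te(\ga)$ together with \textbf{A)}--\textbf{D)} for $U_K$ force \textbf{A)}--\textbf{D)} for the composite. The paper's own proof asserts this last implication in a single sentence, whereas you sketch the tree-level bookkeeping explicitly; your sketch is sound (in particular, a degree-zero cocycle in $\GC$ is supported on graphs with at least four vertices, so $\Te(\ga)_1=0$ and the linear part of $\exp(-\Te(\ga))$ is the identity, which is what makes your case analysis for \textbf{C)} and \textbf{D)} go through).
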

\begin{proof}
Let $F'$ be an SFQ. Our goal is to prove that 
the homotopy class of $F'$ contains a representative 
$F$ whose corresponding $L_{\infty}$ quasi-iso\-mor\-phism  
\eqref{U} satisfies  Properties {\bf A)} -- {\bf D)}  listed above. 

Let us denote by $F_K$ an SFQ whose corresponding 
$L_{\infty}$ quasi-iso\-mor\-phism
\begin{equation}
\label{U-K}
U_K : V^{\bul+1}_A  \leadsto \sCbu(A) 
\end{equation}
satisfies Properties {\bf A)} -- {\bf D)}. (For example, we can 
choose the SFQ coming from Kontsevich's construction \cite{K}.)

Theorem \ref{thm:torsor} implies that there exists a 
degree zero cocycle $\ga' \in \dfGC$ such that 
$F'$ is homotopy equivalent to the SFQ
\begin{equation}
\label{gapr-F-K}
\exp(\ga') \big(F_K \big)\,.
\end{equation}

On the other hand, we have isomorphism \eqref{H-0}. 
Therefore, $\ga'$ is cohomologous to a cocycle 
$\ga \in \GC$ and hence $F'$ is homotopy equivalent to 
\begin{equation}
\label{ga-F-K}
\exp(\ga) \big(F_K \big)\,.
\end{equation}

Since the $L_{\infty}$-derivation $\Te(\ga)$ satisfies 
Properties {\bf a)} -- {\bf d)} and the $L_{\infty}$ quasi-iso\-mor\-phism 
\eqref{U-K} satisfies Properties  {\bf A)} -- {\bf D)}, we conclude that
the $L_{\infty}$ quasi-iso\-mor\-phism corresponding to the SFQ \eqref{ga-F-K}
also satisfies Properties  {\bf A)} -- {\bf D)}.
 
Theorem \ref{thm:main} is proved. 
\end{proof}


\begin{thebibliography}{99}

\bibitem{BDW} H. Bursztyn, V. Dolgushev, and S. Waldmann, 
Morita equivalence and characteristic classes of star products,  
accepted to J. Reine Angew. Math.; arXiv:0909.4259.

\bibitem{erratum} V.A. Dolgushev,
Erratum to: "A Proof of Tsygan's Formality Conjecture for an
Arbitrary Smooth Manifold", arXiv:math/0703113.

\bibitem{CEFT} V.A. Dolgushev,
Covariant and Equivariant Formality Theorems,
Adv. Math., {\bf 191}, 1 (2005) 147--177;
arXiv:math/0307212.

\bibitem{overQ} V.A. Dolgushev, On stable formality quasi-iso\-mor\-phisms 
over $\bbQ$\,, in preparation.

\bibitem{stable1} V.A. Dolgushev, Stable formality quasi-iso\-mor\-phisms 
for Hochschild cochains I, arXiv:1109.6031.  

\bibitem{stable11} V.A. Dolgushev, Stable formality quasi-iso\-mor\-phisms 
for Hochschild cochains II, in preparation.

\bibitem{notes} V.A. Dolgushev and C.L. Rogers, Lecture Notes on Graph Complexes, 
GRT, and Willwacher's Construction, in preparation.

\bibitem{Chern} V.A. Dolgushev, C.L. Rogers, and T.H. Willwacher, 
Kontsevich's graph complex and the deformation complex of the sheaf 
of polyvector fields, in preparation.

\bibitem{Drinfeld} V.G. Drinfeld, On quasitriangular quasi-Hopf algebras and
on a group that is closely connected with ${\rm Gal}(\overline{\bbQ}/\bbQ)$.
(Russian) Algebra i Analiz {\bf 2}, 4 (1990) 149--181;
translation in Leningrad Math. J. {\bf 2}, 4 (1991) 829--860.


\bibitem{OCHA}  H. Kajiura and J. Stasheff, Homotopy algebras inspired by 
classical open-closed string field theory, Commun. Math. Phys. {\bf 263} (2006) 553--581; 
arXiv:math/0410291.

\bibitem{K-conj} M. Kontsevich, Formality conjecture,
{\it Deformation theory and symplectic geometry} (Ascona, 1996),
139--156, Math. Phys. Stud., 20, Kluwer Acad. Publ.,
Dordrecht, 1997.

\bibitem{K} M. Kontsevich, Deformation quantization of
Poisson manifolds, Lett. Math. Phys.,
{\bf 66} (2003) 157-216; q-alg/9709040.

\bibitem{VdB} M. Van den Bergh, On global deformation quantization 
in the algebraic case,  J. Algebra {\bf 315}, 1 (2007) 326--395. 

\bibitem{Thomas} T. Willwacher, M. Kontsevich's graph complex and the 
Grothendieck-Teichm\"uller Lie algebra, arXiv:1009.1654.

\bibitem{StableThomas} T. Willwacher, Stable cohomology of polyvector fields, 
arXiv:1110.3762.

\bibitem{Ye}  A. Yekutieli, Mixed resolutions, simplicial sections and 
unipotent group actions, Israel J. Math. {\bf 162} (2007) 1--27. 


\end{thebibliography}
\end{document}